\documentclass{amsart} \usepackage{url}
\usepackage{amsmath,amsfonts,euscript,amsthm,amssymb,amscd}
\usepackage[T1]{fontenc}
\usepackage{lmodern}
\usepackage{epigraph}
\usepackage[utf8]{inputenc}
\usepackage{enumerate}
\input{diag}

\setlength\epigraphwidth{.8\textwidth}
\theoremstyle{definition}
\newtheorem{Def}{Definition}[subsection]
\newtheorem{Def-Prop}[Def]{Definition-Proposition}

\newtheorem{Ex}[Def]{Example}
\newtheorem{Th}[Def]{Theorem}

\newtheorem{remark}[Def]{Remark}
\newtheorem{Prop}[Def]{Proposition}
\newtheorem{Lemma}[Def]{Lemma}
\newtheorem{Cor}[Def]{Corollary}

\newtheorem{Hyp}{Hypothesis}

\newcommand{\hdot}{{\:\raisebox{3pt}{\text{\circle*{1.5}}}}}

 \newcommand{\Ba}{\begin{array}}
 \newcommand{\Ea}{\end{array}}
\usepackage[all]{xy}
\usepackage{tikz}
 \tikzstyle{int}=[circle, draw,fill=black,outer sep=0,minimum size=3pt, inner sep=0]
  \tikzstyle{ext}=[circle, draw=black,outer sep=0,inner sep=1pt]

\begin{document}

\author{Alexey Kalugin} 
\address{Max Planck Institut für Mathematik in den Naturwissenschaften, Inselstraße 22, 04103 Leipzig, Germany}
\email{alexey.kalugin@mis.mpg.de}
\title{Remarks on the Grothendieck-Teichmüller group and A. Beilinson's gluing.}

\maketitle
\begin{abstract} In this note, we study A. Beilinson's gluing for perverse sheaves in the case of the diagonal arrangement and its relation to the Grothendieck-Teichmüller group. We also explain a relation to the Etingof-Kazhdan quantisation. 

\end{abstract}

\subsection{Introduction} Let $\textsf M^{\mathrm B}(\mathbb A^n,\mathcal S_{\emptyset})$ be a category of unipotent perverse sheaves on a complex $n$-affine space which are lisse with respect to a diagonal stratification $\mathcal S_{\emptyset}.$ We propose the following: 

\begin{Hyp}\label{Hyp1}
\begin{enumerate}[(i)]
\item For every binary $n$-labelled tree $T$ there exist a fiber functor:
$$
\omega_T\colon \textsf M^{\mathrm B}(\mathbb A^n,\mathcal S_{\emptyset})\longrightarrow \textsf {Vect}_{\mathbb Q}
$$
\item A collection $\mathcal Loc_n:=\{ \textsf M^{\mathrm B}(\mathbb A^n,\mathcal S_{\emptyset}),\omega_T\}_{T\in Tree(n)}$ naturally assembles into a fibered category over a category $\Pi_1^{\mathrm B}({FM}^n(\mathbb A)),$ where $\Pi_1^{\mathrm B}({FM}^n(\mathbb A)),$ is a Betti (=pro-unipotent) fundamental groupoid of the Fulton-MacPherson space of $n$-points in $\mathbb A$ \cite{FM}. 
\item The corresponding category of cartesion sections gives a $\Sigma_n$-equivariant equivalence:
$$
\Gamma_{cart}(\mathcal {L}oc_n)\cong \textsf M^{\mathrm B}(\mathbb A^n,\mathcal S_{\emptyset}),
$$
where a symmetric group $\Sigma_n$ acts on $\mathbb A^n$ by permuting coordinates. 
\end{enumerate}
\end{Hyp}  
Note that a collection $\{\Pi_1^{\mathrm B}({FM}^n(\mathbb A))\}_{n \geq 1}$ has a natural structure of an operad \cite{Kon4}. The equivalences from Hypothesis \ref{Hyp1} are compatible with operadic compositions. Denote by $\mathrm {GT}_{\mathrm {un}}$ the pro-unipotent Grothendieck-Teichmüller group \cite{Drin3}. By $\textsf {M}^{\mathrm {B}}(Ran(\mathbb A),\mathcal S_{\emptyset})$ we denote a category of unipotent perverse sheaves on a Ran space of $\mathbb A^1$ \cite{BD} \cite{KalQ}. These lead to the following: 

\begin{Hyp}\label{Hyp2} There exists a morphism:
$$ 
\mathrm {GT}_{\mathrm {un}}\longrightarrow \mathrm {Aut}(\textsf {M}^{\mathrm {B}}(Ran(\mathbb A),\mathcal S_{\emptyset})
$$

\end{Hyp} 
Under the equivalence between factorizable objects in $\textsf {M}^{\mathrm {B}}(Ran(\mathbb A),\mathcal S_{\emptyset})$ and conilpotent Hopf algebras \cite{KS2} \cite{KalQ} Hypothesis \ref{Hyp2} corresponds to Theorem $11.1.7$ from \cite{Fre} (there is a natural equivalence between $2$-algebras (DG-algebras over and operad of little $2$-disks) and DG-sheaves on a Ran space \cite{Lu}. Following V. Schechtman \cite{Sch1} we consider examples of above statements in the case of affine spaces $\mathbb A^2$ and $\mathbb A^3$ and discuss a relation to the Etingof-Kazhdan quantisation \cite{EK}. A more detailed account shall appear in \cite{KalT}. 
\subsection{Acknowledgments} This work was supported by the Max Planck Institute for Mathematics in Sciences.

\subsection{Notation} For an integer $n$ we denote by $[n]$ a set of elements $[n]:=\{1,2,\dots,n\}.$ We denote by $\Sigma_n$ the symmetric group on $n$ letters. Let $\textsf C$ be an abelian category by a fiber functor $\omega\colon \textsf A\rightarrow \textsf {Vect}_{\mathbb Q}$ we understand an exact and faithful functor \cite{Deligne}, where $\textsf {Vect}_{\mathbb Q}$ is a category of finite dimensional $\mathbb Q$-vector spaces. By $\textsf C_{\omega}^{\textsf A}$ we denote the corresponding Tannakian dual coalgebra. By $\textsf {Cat}$ we denote a $2$-category of all small categories.

\subsection{A. Beilinson's gluing} Let $(X,\mathcal O_X)$ be a complex variety space equipped with a Whitney stratification $\mathcal S.$ By $\textsf M(X,\mathcal S)$ we denote a category of perverse sheaves smooth with respect to $\mathcal S.$\footnote{Here we assume the middle perversity function in the sense of \cite{BBD}.} With every regular function $f\colon X\rightarrow \mathbb A^1$ we can associate the following diagram of algebraic varieties $i\colon D\longrightarrow X\longleftarrow U\colon j. $ Here by $D$ we have denoted the principal divisor defined by $D:=f^{-1}(0)$ and by $U:=f^{-1}(\mathbb C^{\times})$ the corresponding open complement. Following P. Deligne \cite{DELC}\footnote{We shift cycles by $[-1]$ in order to make them $t$-exact.} and O. Gabber \cite{BBD} we have a {functor of nearby cycles} $\Psi_{f}\colon \textsf {M}(X,\mathcal S)\longrightarrow \textsf {M}(Z,\mathcal S)$ and a functor of vanishing cycles $\Phi_{f}\colon \textsf{M}(X,\mathcal S)\longrightarrow \textsf{M}(Z,\mathcal S).$ We have a natural transformation $T_{\Psi}\colon\Psi_f\longrightarrow   \Psi_f$ (resp. $T_{\Phi}\colon\Phi_f\longrightarrow   \Phi_f$) called the monodromy transformation of nearby cycles (resp.  {monodromy transformation of vanishing cycles}.) We also have {canonical and variations morphism}, which are natural transformation of functors: $can\colon \Psi_{f}\longleftrightarrow\Phi_ f\colon var.$ We also denote by $\Psi_f^u$ (resp. $\Phi_f^u$) a part of nearby (resp. vanishing) cycles where a monodromy operator act unipotently. Following A. Beilinson \cite{B2} with a regular function $f$ on $X$ we associate a {gluing category} $\textsf{Glue}_f(U,Z).$ This is a category with following objects $
\{\mathcal E_U,\mathcal E_Z,u,v\},$ where $u\colon \Psi_f^u\mathcal E_U\longrightarrow \mathcal E_Z,\quad v\colon \Psi_{f}^{u}\mathcal E_U\longleftarrow \mathcal E_Z,$ 
where $\EuScript E_U \in \textsf M(U,\mathcal S)$ and $\EuScript E_Z \in \textsf M(Z,\mathcal S),$ such that $vu=T_{\Psi}-1.$ We have the following: 
\begin{Th}[A. Beilinson]\label{Bel1} For every $f\in \mathcal O_X$ we have a functor $F_f:$ 
\begin{equation}\label{}
F_f\colon \textsf M(X,\mathcal S)\longrightarrow \textsf {Glue}_f(U,Z)$$
defined by the rule:
$$F_f\colon \mathcal E\longmapsto \{j^*\mathcal E,\Phi_{f}^{u}\mathcal E,can,var\}
\end{equation}
This functor extends to an equivalence between categories $\textsf M(X,\mathcal S)$ and $\textsf{Glue}_f(U,Z).$
\end{Th}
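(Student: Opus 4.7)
The plan is to follow Beilinson's original strategy and construct an explicit quasi-inverse using the \emph{maximal extension functor}. The first step is to verify that $F_f$ lands in $\textsf{Glue}_f(U,Z)$, i.e.\ that $var\circ can=T_\Psi-1$ on $\Psi_f^u j^*\mathcal E$ --- a classical compatibility between the canonical/variation morphisms and the monodromy of nearby cycles, reducible to the universal pair $(j_!\mathbb Q_U,j_*\mathbb Q_U)$ by exactness.

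The technical core is the construction of Beilinson's maximal extension $\Xi_f\colon \textsf M(U,\mathcal S)\to \textsf M(X,\mathcal S)$. I would realise it through the pro-unipotent Kummer local system $\mathcal L=\varprojlim \mathcal L_n$ on $\mathbb G_m$, where $\mathcal L_n$ is the rank-$n$ unipotent Jordan block: pull back along $f$, tensor with $\mathcal E_U$, apply $j_{!*}$, and extract the appropriate subquotient of $j_*(\mathcal E_U\otimes f^*\mathcal L_n)$ in the limit $n\to\infty$. This produces two canonical short exact sequences
\begin{equation*}
0\longrightarrow j_!\mathcal E_U\longrightarrow \Xi_f\mathcal E_U\xrightarrow{\;\beta\;}\Psi_f^u\mathcal E_U\longrightarrow 0,
\end{equation*}
\begin{equation*}
0\longrightarrow \Psi_f^u\mathcal E_U\xrightarrow{\;\alpha\;}\Xi_f\mathcal E_U\longrightarrow j_*\mathcal E_U\longrightarrow 0,
\end{equation*}
with $\beta\circ\alpha=T_\Psi-1$.

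The quasi-inverse $G_f(\mathcal E_U,\mathcal E_Z,u,v)$ is then defined as the middle cohomology of the two-step complex
\begin{equation*}
\Psi_f^u\mathcal E_U\xrightarrow{(\alpha,-u)} \Xi_f\mathcal E_U\oplus \mathcal E_Z \xrightarrow{(\beta,v)} \Psi_f^u\mathcal E_U,
\end{equation*}
which is genuinely a complex because $vu=T_\Psi-1=\beta\alpha$. A direct computation using the two short exact sequences shows $j^*G_f\cong\mathcal E_U$, $\Phi_f^u G_f\cong\mathcal E_Z$, and that the induced $can,var$ recover $u,v$, yielding the natural isomorphism $F_f\circ G_f\cong\mathrm{id}$.

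The main obstacle is the other composition $G_f\circ F_f\cong\mathrm{id}$: one must show that every $\mathcal E\in\textsf M(X,\mathcal S)$ is reconstructible as the middle cohomology of the above complex applied to $F_f(\mathcal E)$. This amounts to the conservativity statement that $j^*\mathcal E=0$ together with $\Phi_f^u\mathcal E=0$ forces $\mathcal E=0$, combined with a careful five-lemma diagram chase exploiting the naturality of $\alpha,\beta$ and the adjunction triangles connecting $\Psi_f^u$, $\Phi_f^u$, $j^*$ and $i^*$. This bookkeeping between the four functors $j_!$, $j_*$, $\Xi_f$, $\Psi_f^u$ is the delicate heart of Beilinson's argument.
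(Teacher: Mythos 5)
The paper does not prove this statement --- it is quoted as Beilinson's theorem from \cite{B2} --- and your proposal is a faithful reconstruction of Beilinson's own argument via the maximal extension functor $\Xi_f$, its two short exact sequences, and the middle-cohomology quasi-inverse, so it is essentially ``the same approach as the paper.'' The only small imprecision is in your construction of $\Xi_f$: Beilinson does not pass through $j_{!*}$ but rather takes (a limit over $n$ of) kernels/images of the natural maps $j_!(\mathcal E_U\otimes f^*\mathcal L_n)\to j_*(\mathcal E_U\otimes f^*\mathcal L_n)$ for the unipotent Jordan-block local systems, which is what actually yields the two exact sequences and the identity $\beta\alpha=T_\Psi-1$; with that adjustment the outline is correct.
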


\subsection{Fiber functors and trees}  For a natural number $n\in \mathbb N_{\geq 1}$ we consider the corresponding complex affine space $\mathbb A^n$ with coordinates $(z_i)_{i=1,\dots n}.$ We equip $\mathbb A^n$ with a {diagonal stratification} $\mathcal S_{\emptyset}=\{\Delta_{ij}\},$ where $\Delta_{ij}=z_i-z_j.$ The unique minimal closed stratum of $\mathcal S_{\emptyset}$ will be denoted by $\Delta$ and the unique maximal open stratum will be denoted by $U^{n}.$ We denote by $\textsf M^{\mathrm B}(\mathbb A^n,\mathcal S_{\emptyset})$ a category of perverse sheaves which are smooth with respect to the diagonal stratification $\mathcal S_{\emptyset}$ and every perverse sheaf is an extension of direct sums of perverse sheaves supported on closed strata of $\mathbb A^n$ \cite{Kho}. Denote by $Tree(n)$ a set (groupoid) of binary rooted trees with leaves labelled by a finite set $[n].$ We are going to define fiber functors associated with a tree $T\in Tree(n):$
\begin{Ex} We start with the simplest (nontrivial) case $\mathbb A^2$ with coordinates $(z_1,z_2).$ There two binary $2$-labelled trees:
$$
T_1=\begin{xy}
 <0mm,-0.55mm>*{};<0mm,-2.5mm>*{}**@{-},
 <0.5mm,0.5mm>*{};<2.2mm,2.2mm>*{}**@{-},
 <-0.48mm,0.48mm>*{};<-2.2mm,2.2mm>*{}**@{-},
 <0mm,0mm>*{\circ};<0mm,0mm>*{}**@{},
 <0.5mm,0.5mm>*{};<2.7mm,2.8mm>*{^{1}}**@{},
 <-0.48mm,0.48mm>*{};<-2.7mm,2.8mm>*{^{2}}**@{},
 \end{xy}\qquad T_2=\begin{xy}
 <0mm,-0.55mm>*{};<0mm,-2.5mm>*{}**@{-},
 <0.5mm,0.5mm>*{};<2.2mm,2.2mm>*{}**@{-},
 <-0.48mm,0.48mm>*{};<-2.2mm,2.2mm>*{}**@{-},
 <0mm,0mm>*{\circ};<0mm,0mm>*{}**@{},
 <0.5mm,0.5mm>*{};<2.7mm,2.8mm>*{^{2}}**@{},
 <-0.48mm,0.48mm>*{};<-2.7mm,2.8mm>*{^{1}}**@{},
 \end{xy}\
$$
We define functors $\omega_{T_{i}}\colon \textsf M^{\mathrm B}(\mathbb A^2,\mathcal S_{\emptyset})\longrightarrow \textsf {Vect}_{\mathbb Q}$ $i=1,2$ by the rule:
$$\omega_{T_1}:=\Gamma(\mathbb A,\Psi^{u}_{z_{1}-z_{2}}\oplus \Phi^{u}_{z_1-z_2})[-1],\qquad \omega_{T_2}:=\Gamma(\mathbb A,\Psi^u_{z_{2}-z_{1}}\oplus \Phi^u_{z_2-z_1})[-1].$$
By the construction these functors are exact and moreover by Theorem \ref{Bel1} implies that this functor is faithful and hence it is a fiber functor. Indeed we get the classical $(\Psi,\Phi)$-description of the category of perverse sheaves: we have a morphism $f_{T_1}\colon \mathbb A^2\rightarrow \mathbb A$ (resp. $f_{T_2}\colon \mathbb A^2\rightarrow \mathbb A$)defined by the rule $(z_1,z_2)\mapsto (z_1-z_2)$ (resp. $(z_1,z_2)\mapsto (z_2-z_1)$). The shifted pushforward defines an equivalence between $\textsf M^{\mathrm B}(\mathbb A^2,\mathcal S_{\emptyset})$ and category of perverse sheaves on $\mathbb A,$ which are smooth with respect to a stratification $\{0\}\subset \mathbb A.$ The corresponding Tannakian dual coalgebra is a quiver coalgebra of the quiver $\bullet\longleftrightarrow \bullet.$

\end{Ex}

\begin{Ex}\label{Ma} Consider a case of $\mathbb A^3$ with a coordinate $(z_1,z_2,z_3).$ For example we take the following tree: 
$$
T=\begin{xy}
 <0mm,0mm>*{\circ};<0mm,0mm>*{}**@{},
 <0mm,-0.49mm>*{};<0mm,-3.0mm>*{}**@{-},
 <0.49mm,0.49mm>*{};<1.9mm,1.9mm>*{}**@{-},
 <-0.5mm,0.5mm>*{};<-1.9mm,1.9mm>*{}**@{-},
 <-2.3mm,2.3mm>*{\circ};<-2.3mm,2.3mm>*{}**@{},
 <-1.8mm,2.8mm>*{};<0mm,4.9mm>*{}**@{-},
 <-2.8mm,2.9mm>*{};<-4.6mm,4.9mm>*{}**@{-},
   <0.49mm,0.49mm>*{};<2.7mm,2.3mm>*{^1}**@{},
   <-1.8mm,2.8mm>*{};<0.4mm,5.3mm>*{^2}**@{},
   <-2.8mm,2.9mm>*{};<-5.1mm,5.3mm>*{^3}**@{},
 \end{xy}
$$
Let $\mathbb A^2$ with a coordinate $(t_1,t_2).$ Consider the morphism $f_T\colon \mathbb A^3\longrightarrow \mathbb A^2,$ defined by the rule $f_T\colon (z_1,z_2,z_3)\longmapsto (z_3-z_2,z_1-z_2).$ Denote by $\mathcal S$ stratification on $\mathbb A^2$ associated with hyperplanes $t_1=t_2,$ $t_1=0,$ $t_2=0.$ The morphism $f_T$ respect these stratification and defines an equivalence of abelian categories $f_{T*}[-1]\colon \textsf M^{\mathrm B}(\mathbb A^3,\mathcal S_{\emptyset})\overset{\sim}{\longrightarrow}  \textsf M^{\mathrm B}(\mathbb A^2,\mathcal S_{}).$ Consider the following quiver (we assume that $u$-morphisms go down and $v$-morphisms go up):
\begin{equation*}
\begin{diagram}[height=2.5em,width=4.8em]
 &  & V  & & \\
  & \ruTo_{v_{01}}\ldTo^{u_{01}} & \uTo^{v_{12}}\dTo_{u_{12}} & \luTo_{v_{02}}\rdTo^{u_{02}} &\\
V_{01} &  & V_{12} &   & V_{02} \\
  & \luTo_{v^{01}}\rdTo^{u^{01}} & \uTo^{v^{12}}\dTo_{u^{12}} & \ruTo_{v^{02}}\ldTo^{u^{02}} &\\
  &  & V_{012}  & & \\
\end{diagram}
\end{equation*}
where $V_{ij}$ and $V_{012}$ and $V$ are vector spaces. Building on Theorem \ref{Bel1} in \cite{Sch1} (see also \cite{Sch2} \cite{Sch3}) it was proved that the datum of the quiver together with some relations (see \textit{ibid}.) determines a perverse sheaf in $\textsf M^{\mathrm B}(\mathbb A^2,\mathcal S_{})$ and vive versa. Applying the equivalence above (here we use an interaction property of nearby and vanishing cycles for a pushforward along a proper morphism i.e. $\Psi_fg_*\cong g_*\Psi_{gf}$) one defines a fiber functor $\omega_T\colon  \textsf M^{\mathrm B}(\mathbb A^3,\mathcal S_{\emptyset})\longrightarrow \textsf {Vect}_{\mathbb Q}$ by the rule:
$$
\omega_T:=\Gamma(\mathbb A,\underbrace{\Psi^u_{z_1-z_2}\Psi^u_{z_3-z_2}}_{V}\oplus \underbrace{\Phi^u_{z_1-z_2}\Psi^u_{z_3-z_2}}_{V_{12}\oplus V_{02}}\oplus \underbrace{\Psi^u_{z_1-z_2}\Phi^u_{z_3-z_2}}_{V_{01}}\oplus \underbrace{\Phi^u_{z_1-z_2}\Phi^u_{z_3-z_2}}_{V_{012}})[-1]
$$
Analogously one defines a fiber functor for any tree $T\in Tree(3).$


\end{Ex}

\begin{remark} \begin{enumerate}[(i)] \item One extends the definition above to an arbitrary dimension. Let $T\in Tree(n),$ we define $\omega_T\colon \textsf M^{\mathrm B}(\mathbb A^n,\mathcal S_{\emptyset})\longrightarrow \textsf {Vect}_{\mathbb Q}$ by the rule:
\begin{equation}\label{bf}
\omega_T:=\bigoplus_{\Lambda=\Psi^u,\Phi^u} \Gamma(\mathbb A,\Lambda_{z_{i_{n}}-z_{i_{n-1}}}\oplus \dots\oplus \Lambda_{z_{i_1}-z_{i_2}})[-1]
\end{equation} 
where $(i_1,i_2)$ is pair of leaves which collide in the tree $T$ first (we orient a tree towards a root) as the second pair we take leaves $(i_3,i_2)$ which collide next (here we assume that $i_3$ is the closest leave to $i_2$. Note that if two pairs collide at the same we do not distinguish the order in the composition, indeed by Lemma $10.2$ \cite{BFS} two compositions are identically equal. Hence \eqref{bf} is well defined. 
\item Recall that Theorem \ref{Bel1} holds for $\EuScript D$-modules and more generally mixed Hodge modules \cite{Saito}. Hence fiber functors \eqref{bf} can be defined in the mixed Hodge (more generally motivic) setting. It would be very interesting to define and study ${\textsf C}^{ \textsf M^{\mathrm B}(\mathbb A^n,\mathcal S_{\emptyset})}_{\omega_T}$ as an object of the category of mixed Hodge structures. Note that this coalgebra is closely related to universal enveloping algebra of P. Deligne's motivic fundamental group \cite{DelF} at tangential base points. 
\end{enumerate} 
\end{remark}

\subsection{Local systems of categories} Further we assume that $n=1,2,3.$ Recall that a Fulton-MacPherson compactification $FM^n(\mathbb A^1)$ is defined as a real blowup of the space of $n$ distinct complex points \cite{FM}. This space is naturally a manifold with corners such that its interior can be identified with $U^n$ (modulo affine transformation). We consider a Betti fundamental groupoid (pro-unipotent completion of the Poincaré groupod) $\Pi_1^{\mathrm {B}}(FM^n(\mathbb A^1))$ with base points defined by points in the real strata of the smallest dimension. Such base points can be identified with $[n]$-labelled binary trees. We define a $2$-functor $\mathcal Loc_n\colon  \Pi_1^{\mathrm {B}}(FM^n(\mathbb A^1)\longrightarrow \textsf {Cat}$ by the rule: $T\longmapsto (\textsf {M}^{\mathrm B}(\mathbb A^n,\mathcal S_{\emptyset}),\omega_T)$ For a path $\gamma$ in $FM^n(\mathbb A^1)$ between two binary trees $T_i$ and $T_j$ we define an equivalence between categories with fiber functors as $\sigma_{T_i \,T_j}^*,$ where $\sigma_{T_i\, T_j}\colon \mathbb A^n\rightarrow \mathbb A^n$ is a unique permutation of coordinates such that $f_{T_i}\sigma_{T_i\,T_j}=f_{T_j}.$ One computes that the resulting operators acts unipotently and hence we get a representation of the pro-unipotent completion. Denote by $\Gamma_{cart}(\mathcal L oc_n)$ the category of cartesion section of the corresponding fibration in the sense of A. Grothendieck \cite{SGA1}. We have the following:
\begin{Prop}\label{equi} We have a $\Sigma$-equivariant equivalence of categories:
$$
\Gamma_{cart}(\mathcal L oc_2)\cong \textsf M^{\mathrm B}(\mathbb A^2,\mathcal S_{\emptyset}),\quad \Gamma_{cart}(\mathcal L oc_3)\cong \textsf M^{\mathrm B}(\mathbb A^3,\mathcal S_{\emptyset}),
$$
\end{Prop}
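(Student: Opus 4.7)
The plan is to build a comparison functor $F_n\colon \textsf M^{\mathrm B}(\mathbb A^n,\mathcal S_{\emptyset})\to \Gamma_{cart}(\mathcal L oc_n)$ and verify that it is an equivalence by combining Beilinson's Theorem \ref{Bel1} with the quiver presentation of Example \ref{Ma} (for $n=3$) and its analogues for every other tree. After fixing a reference tree $T_{0}\in Tree(n)$, I would send $\mathcal E$ to the section $T\mapsto \sigma_{TT_{0}}^{*}\mathcal E$, with transition data along a morphism $T_{j}\to T_{i}$ given by the strict equality $\sigma_{TT_{0}}=\sigma_{T_{i}T_{j}}\sigma_{T_{j}T_{0}}$ of coordinate permutations. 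Since the whole construction is built from permutations of coordinates that commute with the $\Sigma_{n}$-action on $\mathbb A^{n}$, the $\Sigma_{n}$-equivariance is automatic.

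For $n=2$ the argument is essentially a translation of Beilinson's theorem. The fiber functor $\omega_{T_{1}}$ is the underlying vector-space functor on the gluing category $\textsf{Glue}_{z_{1}-z_{2}}(U^{2},\Delta)$ once we identify $\textsf M^{\mathrm B}(\mathbb A^{2},\mathcal S_{\emptyset})$ with it via Theorem \ref{Bel1}. A cartesian section of $\mathcal L oc_{2}$ is then an object of $\textsf{Glue}_{z_{1}-z_{2}}(U^{2},\Delta)$ equipped with a compatible identification with its pullback along the swap $\sigma\colon (z_{1},z_{2})\mapsto (z_{2},z_{1})$, plus a coherence condition for the single non-trivial loop in $\Pi_{1}^{\mathrm B}(FM^{2}(\mathbb A^{1}))$. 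This last condition reduces to the statement that the monodromy of $\Psi^{u}$ agrees with the operator coming from the $\Sigma_{2}$-swap, which holds tautologically; fully-faithfulness of $F_{2}$ follows from the faithfulness of $\omega_{T_{1}}$.

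For $n=3$ I would run the same comparison tree by tree. Example \ref{Ma} identifies $(\textsf M^{\mathrm B}(\mathbb A^{3},\mathcal S_{\emptyset}),\omega_{T})$, for a chosen tree $T$, with Schechtman's quiver representations attached to $f_{T}\colon \mathbb A^{3}\to \mathbb A^{2}$; each of the remaining trees in $Tree(3)$ yields a parallel presentation, related to the first by the composition of $\sigma_{T'T}^{*}$ and the commutation $\Psi_{f}g_{*}\cong g_{*}\Psi_{gf}$ used in the example. A cartesian section is then a family of Schechtman-quiver data indexed by $Tree(3)$ with compatible transition isomorphisms, and Schechtman's theorem applied to any single tree already produces an object of $\textsf M^{\mathrm B}(\mathbb A^{3},\mathcal S_{\emptyset})$, giving essential surjectivity. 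Full faithfulness is again immediate from the faithfulness of any single $\omega_{T}$.

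The main obstacle is controlling the monodromy of $\Pi_{1}^{\mathrm B}(FM^{n}(\mathbb A^{1}))$: one must check that the loops obtained by braiding two adjacent leaves of a tree act on the fibre as the elementary commutator of adjacent $\Psi^{u},\Phi^{u}$ operators, and that the resulting groupoid representation is genuinely pro-unipotent, so that cartesian descent is strict. For $n=2$ this reduces to a direct $(\Psi,\Phi)$-computation; for $n=3$ the key point is that the obstruction to commuting two nested $\Psi^{u}$'s is a unipotent ``associator'' operator of exactly the type that underlies the Drinfeld and Etingof--Kazhdan constructions alluded to in the introduction. Once this is in hand, descent along $\Pi_{1}^{\mathrm B}$ yields the desired $\Sigma_{n}$-equivariant equivalences.
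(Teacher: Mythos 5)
The paper does not actually prove Proposition~\ref{equi} --- it is left to the reader with a pointer to \cite{KaSh}, Subsection 9A, for $n=2$ --- so your attempt can only be judged on its own terms, and as written it has a genuine gap at its central point. Your comparison functor sends $\mathcal E$ to $T\mapsto\sigma_{TT_0}^*\mathcal E$ with transition data coming from the strict group law $\sigma_{TT_0}=\sigma_{T_iT_j}\sigma_{T_jT_0}$ of coordinate permutations. But the base of the fibration is the groupoid $\Pi_1^{\mathrm B}(FM^n(\mathbb A^1))$, whose morphism sets between two trees are torsors over a nontrivial pro-unipotent fundamental group, and the whole content of $\mathcal Loc_n$ is that the transport depends on the homotopy class of the path (this is what ``one computes that the resulting operators act unipotently'' refers to, and what makes the $\mathrm{GT}$-action in Hypothesis~\ref{Hyp2} possible). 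A section whose transition isomorphisms are determined by the permutations alone assigns the identity to every loop, and such a section is cartesian only if the monodromy acts trivially --- which it does not: already for $n=2$ the generator of $\pi_1$ acts by $T_\Psi=1+vu$. Relatedly, your claim that the $n=2$ coherence ``reduces to the statement that the monodromy of $\Psi^u$ agrees with the operator coming from the $\Sigma_2$-swap, which holds tautologically'' cannot be right as stated: the swap is an involution while $T_\Psi$ is unipotent of infinite order; the true statement is that the square of the half-loop transport (an operator built from $u,v$, identified via the hyperbolic-stalk comparison of \cite{KaSh} 9A) equals the full monodromy, and verifying this is a computation, not a tautology.

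The same gap undermines essential surjectivity. A cartesian section is strictly more data than an object in one fiber: it is an object together with a descent datum over all of $\Pi_1^{\mathrm B}$. Restricting a given cartesian section to a single tree and invoking Beilinson or Schechtman produces a perverse sheaf $\mathcal E$, but you must then show that the given transition isomorphisms coincide with the canonical ones attached to $F_n(\mathcal E)$ --- equivalently, that the relevant ``homotopy fixed point'' category collapses onto $\textsf M^{\mathrm B}(\mathbb A^n,\mathcal S_\emptyset)$. This is exactly the verification you defer to the final paragraph as ``the main obstacle'' (braiding of adjacent leaves acting by the elementary $\Psi^u,\Phi^u$ commutators for $n=2$, and the associator-type coherence for $n=3$), and it is the mathematical heart of the proposition rather than a detail to be postponed. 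You have correctly identified where the difficulty lives, but the proof is incomplete until that computation is carried out; the skeleton (Beilinson/Schechtman quiver presentations tree by tree, plus $\Sigma_n$-equivariance from compatibility of the coordinate permutations) is otherwise reasonable and consistent with what the paper's reference to \cite{KaSh} suggests.
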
 
\begin{proof} We leave it to the reader, however see \cite{KaSh} (Subsection 9A) for $n=2.$

\end{proof} 
Denote by $\EuScript G_n$ a group of automorphisms of $\Pi_1^{\mathrm {B}}(FM^n(\mathbb A^1))$ which are identical on objects. From Proposition \ref{equi} one gets the following:
\begin{Cor} For $n=1,2,3$ we have a canonical action of a group $\EuScript G_n$ on $\textsf M^{\mathrm B}(\mathbb A^n,\mathcal S_{\emptyset})$). 

\end{Cor}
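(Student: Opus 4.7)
The plan is to obtain the action as a formal consequence of Proposition \ref{equi} together with the defining property of $\EuScript G_n$. First, I would observe that $\EuScript G_n$ acts tautologically on $\Pi_1^{\mathrm B}(FM^n(\mathbb A^1))$ and therefore, by pullback, on the $2$-category of $\textsf{Cat}$-valued $2$-functors out of it. Since every $\varphi \in \EuScript G_n$ fixes objects and since the object-level data of $\mathcal L oc_n$---namely the assignment $T \mapsto (\textsf M^{\mathrm B}(\mathbb A^n,\mathcal S_{\emptyset}),\omega_T)$---depends only on the tree $T$ itself, the pullback $\varphi^*\mathcal L oc_n$ has the same object-level data as $\mathcal L oc_n$. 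Consequently cartesian sections of $\varphi^*\mathcal L oc_n$ are canonically identified with cartesian sections of $\mathcal L oc_n$ by relabelling the gluing data along $\varphi$.

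Concretely, for a cartesian section $\{\mathcal E(T),\alpha_\gamma\}$ of $\mathcal L oc_n$ (with $\alpha_\gamma\colon \mathcal L oc_n(\gamma)(\mathcal E(T_1)) \xrightarrow{\sim} \mathcal E(T_2)$ for each path $\gamma\colon T_1 \to T_2$), the action of $\varphi$ produces the section with the same underlying objects $\mathcal E(T)$ but with transition isomorphisms $\alpha_{\varphi(\gamma)}$ for each path $\gamma$. Functoriality of $\varphi$ ensures that the cocycle condition is preserved, and a routine verification shows that this prescription defines a group homomorphism $\EuScript G_n \to \mathrm{Aut}(\Gamma_{cart}(\mathcal L oc_n))$. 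Transporting through the equivalence $\Gamma_{cart}(\mathcal L oc_n) \cong \textsf M^{\mathrm B}(\mathbb A^n,\mathcal S_{\emptyset})$ of Proposition \ref{equi} then yields the desired $\EuScript G_n$-action.

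The main obstacle is checking that this construction yields a strict group action rather than only a pseudo-action, i.e.\ that the coherence isomorphisms arising from the pullback of $2$-functors can be trivialised compatibly. For $n = 1, 2, 3$ this is tractable because the fiber functors $\omega_T$ admit explicit strict representatives as iterated compositions of $\Psi^u$ and $\Phi^u$ along coordinate differences (as worked out in Example \ref{Ma}), allowing one to fix normalisations on the nose; this is precisely what justifies the restriction on $n$. A case-free treatment would pass through the Tannakian dual coalgebra $\textsf C^{\textsf M^{\mathrm B}(\mathbb A^n,\mathcal S_{\emptyset})}_{\omega_T}$, on which $\EuScript G_n$ would act by Hopf algebroid automorphisms; carrying this out in general would require the full force of Hypothesis \ref{Hyp1}.
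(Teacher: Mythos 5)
Your argument is correct and is exactly the one the paper intends: the corollary is stated as an immediate consequence of Proposition \ref{equi}, obtained by letting an object-fixing automorphism of $\Pi_1^{\mathrm B}(FM^n(\mathbb A^1))$ act on $\mathcal L oc_n$ by pullback (which leaves the object-level data untouched), hence on $\Gamma_{cart}(\mathcal L oc_n)$, and transporting through the equivalence. The one caveat is your diagnosis of the restriction to $n=1,2,3$: in the paper this comes from Proposition \ref{equi} and the constructions preceding it being carried out only in that range (the general case being Hypothesis \ref{Hyp1}), not from the strictness issue you raise.
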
 

\begin{remark} 

\begin{enumerate}[(i)]
\item Note that $\{\Pi_1^{\mathrm {B}}(FM^n(\mathbb A^1))\}_{n\geq 1}$ is naturally an operad in the category of groupoids. One shows that $\{\mathcal Loc\}$ is naturally a local system on the operad $\{\Pi_1^{\mathrm {B}}(FM^n(\mathbb A^1))\}_{n\geq 1}$ in the sense of \cite{GK} and the category of section is equivalent to the category of perverse sheaves on the Ran space.

\item Recall that the group of automorphisms of the operad $\{\Pi_1^{\mathrm {B}}(FM^n(\mathbb A^1))\}_{n\geq 1}$ is the Grothendieck-Teichmüller group $\mathrm {GT}_{\mathrm {un}}$ \cite{Fre}. Hence one proves Hypothesis \ref{Hyp2}. It would be very interesting to consider a "derived" version of this picture in particular to relate M. Kontsevich's graph complex to deformation of the category of $!$-sheaves on a Ran space of $\mathbb A^1.$ 
\end{enumerate} 

\end{remark} 

\subsection{Quantisations} In \cite{KalQ} the problem of quantisation of Lie bialgebras was transformed to the problem of constructing isomorphisms between certain fiber functors. Consider a fiber functor $\omega^B$ from \textit{ibid}. This functor is defined as the zero cohomology of the smallest real diagonal with coefficients in sections with real support (hyperbolic stalk) see \cite{KaSh}. We will discuss the space of isomorphisms between functors $\omega^B$ and $\omega_T.$\footnote{In \cite{FPS} the same problem was studied in the case of a normal crossing arrangement $z_1\dots z_n=0$.}
\par\medskip 
Let $\mathcal S_{\emptyset,\mathbb R}$ be a diagonal stratification of a real $n$-affine space $\mathbb A_{\mathbb R}^n,$ we assume that $(x_1,\dots, x_n)$ is a coordinates of the real affine space such that $\mathfrak R(z_i)=x_i.$ According to \textit{ibid.} a perverse sheaf is completely determined by a so-called hyperbolic sheaf i.e. a collection of vector spaces $E_C$ where $C\in \mathcal S_{\emptyset,\mathbb R}$ is a face and operators:  $\gamma_{C}^{C'}\colon E_{C}\rightarrow E_{C'}, \delta_{C'}^C\colon E_{C'}\rightarrow E_{C} $ when ${C}\subset \overline{C}'$ together with some relations (see \textit{ibid}.). The hyperbolic stalks are defined the rule $E_C:=\Gamma(C,\underline{\mathbf R\Gamma}_{\mathbb A_{\mathbb R}^n}).$ We usually denote chambers $C\in \mathcal S_{\emptyset,\mathbb R}$ as totally ordered real numbers i.e. $x_1<x_2<x_3,$ we also sometimes denote by $\Delta^{\mathbb R}$ the minimal diagonal. The following real-analytic interpretation of nearby and vanishing cycles will be important to us:
\begin{Lemma}[M. Kashiwara and P. Schapira \cite{KS}]\label{KS1} For every regular function $f\in \mathcal O_X$ we have the following isomorphism of functors:
$$
\Phi_f\overset{\sim}{\longrightarrow} i^*\underline{\mathbf R^{\hdot}\Gamma}_{\{\mathfrak{R}(f)\geqslant 0\}}[1]
$$
where $i\colon f^{-1}(0):=D\hookrightarrow X.$ 
\end{Lemma}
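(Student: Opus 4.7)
The plan is to combine the open–closed distinguished triangle for the decomposition $X = Z \sqcup V$, with $Z = \{\mathfrak R(f)\geqslant 0\}$ closed and $V = \{\mathfrak R(f)<0\}$ open, together with the defining triangle of the vanishing cycles functor. Writing $\ell\colon V\hookrightarrow X$ for the open inclusion, the standard local-cohomology triangle
$$\underline{\mathbf R\Gamma}_{Z}\mathcal E \longrightarrow \mathcal E \longrightarrow \mathbf R\ell_{*}\ell^{*}\mathcal E \overset{+1}{\longrightarrow}$$
restricts to $D$ under the exact functor $i^{*}$ (since $D\subset Z$, base change is automatic) and rotates to give
$$i^{*}\mathcal E \longrightarrow i^{*}\mathbf R\ell_{*}\ell^{*}\mathcal E \longrightarrow i^{*}\underline{\mathbf R\Gamma}_{Z}\mathcal E[1] \overset{+1}{\longrightarrow}.$$

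The central step is to produce a canonical isomorphism $i^{*}\mathbf R\ell_{*}\ell^{*}\mathcal E \cong \Psi_{f}\mathcal E$. The geometric input is that $f(V)=\{\mathfrak R<0\}\subset\mathbb C^{*}$ is open, contractible and simply connected. By Thom's first isotopy lemma applied to the Whitney stratification $\mathcal S$ (refined, if necessary, by the real-analytic level sets of $\mathfrak R(f)$), the restriction $f|_{V}\colon V\to\{\mathfrak R<0\}$ is stratified-topologically trivial over the contractible base. Hence the stalk at $x\in D$,
$$\bigl(i^{*}\mathbf R\ell_{*}\ell^{*}\mathcal E\bigr)_{x} \;=\; \varinjlim_{W\ni x}\mathbf R\Gamma(W\cap V,\mathcal E),$$
is, for $W$ a sufficiently small ball around $x$, computed by $\mathbf R\Gamma$ of $\mathcal E$ on any Milnor fibre $W\cap f^{-1}(t_{0})$ with $t_{0}\in\{\mathfrak R<0\}$, which is precisely the stalk of $\Psi_{f}\mathcal E$.

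Substituting this identification into the rotated triangle yields
$$i^{*}\mathcal E \longrightarrow \Psi_{f}\mathcal E \longrightarrow i^{*}\underline{\mathbf R\Gamma}_{Z}\mathcal E[1] \overset{+1}{\longrightarrow},$$
which matches the defining triangle $i^{*}\mathcal E \to \Psi_{f}\mathcal E \to \Phi_{f}\mathcal E \overset{+1}{\to}$ of the vanishing-cycles functor. Uniqueness of the cone in the triangulated category of constructible complexes then produces the desired natural isomorphism $\Phi_{f}\mathcal E \cong i^{*}\underline{\mathbf R\Gamma}_{\{\mathfrak R(f)\geqslant 0\}}\mathcal E[1]$. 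The principal obstacle is the identification $i^{*}\mathbf R\ell_{*}\ell^{*}\mathcal E \cong \Psi_{f}\mathcal E$: it requires the stratified triviality of the fibration $f|_{V}$ over the contractible open half-plane, which is secured by the compatibility of the Whitney stratification $\mathcal S$ with the real-analytic function $\mathfrak R(f)$ via Thom's isotopy lemma.
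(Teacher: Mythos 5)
The paper gives no proof of this lemma --- it is quoted from Kashiwara--Schapira (essentially Exercise VIII.13 of \emph{Sheaves on Manifolds}) --- and your reconstruction follows exactly the standard argument behind that citation: restrict the local-cohomology triangle for the decomposition $X=\{\mathfrak R(f)\geq 0\}\sqcup\{\mathfrak R(f)<0\}$ to $D$, identify the term $i^{*}\mathbf R\ell_{*}\ell^{*}$ with nearby cycles using contractibility and simple connectivity of the left half-plane, and compare with the defining triangle of $\Phi_f$. The skeleton is right and the shift in your conclusion matches the one displayed in the lemma.

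One step is, however, justified by a false statement. You claim that $f|_{V}\colon V\to\{\mathfrak R<0\}$ is stratified-topologically trivial over the whole half-plane by Thom's first isotopy lemma; this fails in general, since nothing prevents $f$ from having critical values with negative real part, and the isotopy lemma moreover requires properness, which $f|_V$ need not have. What your stalk computation actually uses --- and what is true --- is purely local: for $x\in D$, a sufficiently small ball $W$ and then a sufficiently small $\varepsilon$, the Milnor--L\^e fibration $f\colon W\cap f^{-1}(\{0<|t|<\varepsilon\})\to\{0<|t|<\varepsilon\}$ is a stratified locally trivial fibration, so $\mathbf R\Gamma(W\cap f^{-1}(S),\mathcal E)$ is constant as $S$ runs over contractible subsets of the punctured $\varepsilon$-disc; taking $S$ to be the half-disc $\{\mathfrak R(t)<0,\ |t|<\varepsilon\}$ (which is all of $W\cap V$ sees after shrinking) and comparing with a single point $t_0$ gives the identification with the Milnor fibre, and simple connectivity of the half-disc matches this with the stalk of $\Psi_f$ defined via the universal cover of $\mathbb C^{\times}$. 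A second, more cosmetic point: ``uniqueness of the cone'' does not by itself yield a \emph{natural} isomorphism, since cones are not functorial; one should instead note that the comparison map $\Psi_f\mathcal E\to i^{*}\mathbf R\ell_{*}\ell^{*}\mathcal E$ is induced by the inclusion of the half-plane into the universal cover, hence is compatible with the two specialization maps out of $i^{*}\mathcal E$, and both triangles are functorial, so the induced map on third vertices is canonical. Neither correction changes the route, which is the intended one.
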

Let $X=\mathbb A^I$ we denote by $\Phi_f^{fake}$ (resp. $\Psi_f^{fake}$) the following functor $i^*\underline{\mathbf R^{\hdot}\Gamma}_{\{\mathfrak {R}(f)\geq 0\}}$  (resp. $i^*\underline{\mathbf R^{\hdot}\Gamma}_{\{\mathfrak {R}(f)<0\}}$) and called it a {fake vanishing (resp. nearby) cycles functor}. From the standard Gysin triangle we have the distinguished triangle:
\begin{equation}\label{im}
\Phi_f[-1]^{fake} \rightarrow i^*\underline{\mathbf R^{\hdot}\Gamma}_{\mathbb A^n_{\mathbb R}}\rightarrow \Psi_f^{fake} {\rightarrow} \Phi_f^{fake}
\end{equation} 
These functor are equipped with a natural transformations $\Psi_f^{fake}\rightarrow \Psi_f$ and $\Phi_f^{fake}\rightarrow \Phi_f$ (Lemma \ref{KS1}) which induce equivalences on the sections with support on a real locus and hence $\mathbf R^{\hdot}\Gamma(D,\Psi_f^{fake})\cong \mathbf R^{\hdot}\Gamma(D,\Psi_f)$ and $\mathbf R^{\hdot}\Gamma(D,\Phi_f^{fake})\cong \mathbf R^{\hdot}\Gamma(D,\Phi_f)$ \cite{FKS}. 

\begin{Ex} Consider the case $\mathbb A^2$ and a fiber functor $\omega_{T_1}.$ Following \cite{KaSh} Subsection $9$A we have:
$$
\Gamma(\mathbb A,(\Psi_{z_1-z_2}\oplus \Phi_{z_1-z_2})\cong \Gamma(\mathbb A,i^*\underline{\mathbf R^{\hdot}\Gamma}_{x_1<x_2}\oplus i^*\underline{\mathbf R^{\hdot}\Gamma}_{x_1\geq x_2})[1]
$$
Applying \eqref{im} we get a morphism from a functor $\omega^B$ to a functor $\omega_{T_1}.$ One shows (see \textit{ibid}.) that this is an equivalence.

\end{Ex}

\begin{Ex} Consider $\mathbb A^3$ with a binary tree $T$ from Example \ref{Ma}. Applying base change one easily computes that $V:=E_{x_1<x_3<x_2}.$ Namely denote by $\mathfrak R(z_3)<\mathfrak R(z_2)$ the locus $\mathbb A^3$ whcih consists of real numbers $(x_1,x_2,x_3)$ such that $x_3<x_2.$ Consider the following diagram:
\begin{diagram}[height=1.9em,width=3em]
 \mathfrak R(z_3)<\mathfrak R(z_2) & \rTo^{v} & \mathbb A^3_{\mathbb R} &   \rTo^{h}   &            \mathbb A^3 &  \\
  & &\uTo^{q} &      &         \uTo_{i_1}     \\
 \mathfrak R(z_3)=\mathfrak R(z_2) >\mathfrak R(z_1)  & \rTo^r & \mathbb A_{\mathbb R}^2 &   \rTo^{p}   &           \mathbb A^2_{z_3=z_2}\\
  & &\uTo^{l} &      &         \uTo_{i_2}     \\
    &  & \mathbb A_{\mathbb R}  &   \rTo^k  &           \mathbb A^1\\
    \end{diagram}
$\Psi_{z_3-z_2}^{fake}:=\mathbf R^{\hdot}i^*_1h_!v_*v^*h^!=\mathbf R^{\hdot} p_!q^*v_*v^*h^!$ and $\Psi_{z_1-z_2}^{fake}:=\mathbf R^{\hdot} i^*_2p_*r_*r^*p^!.$ Hence $\Psi_{z_1-z_2}^{fake}\Psi_{z_3-z_2}^{fake}=\mathbf R^{\hdot}k_*l^*r_*r^*q^*v_*v^*h^!.$ Note that $h^!$ is an exact functor (see \cite{KaSh}) (it takes perverse sheaves to combinatorial sheaves on the real affine space $\mathbb A^3_{\mathbb R}$). Hence it is enough to compute the $*$-extension of the corresponding combinatorial sheaves: let $\mathcal K=v^*h^!\mathcal E,$  $\mathcal E\in \textsf M^{\mathrm B}(\mathbb A^3,\mathcal S_{\emptyset}),$ be a combinatorial sheaf on $\mathfrak R(z_3)<\mathfrak R(z_2)$ (we denote the corresponding combinatorial data (section over faces) by $E_C,$ where $C\in S_{\emptyset,\mathbb R}.$) We are interested in sections of $\mathbf R^{\hdot}v_*\mathcal K$ over faces which have a non empty intersection with an image of $q.$ These chambers are $\{x_1<x_3=x_2\}$ and $\{x_1>x_3=x_2\}.$ Moreover since further we take a pullback along $r$ it is enough to consider $\{x_1<x_3=x_2\}.$ To compute $\Gamma(\{x_1<x_3=x_2\},\mathbf R^{\hdot}v_*\mathcal K)$ we need to take sections over chambers whose closure contains $\{x_1<x_3=x_2\}$ and have a non empty intersection with  $\mathfrak R(z_3)<\mathfrak R(z_2).$ This chamber is $\{x_1<x_3<x_2\},$ hence $V=E_{x_1<x_3<x_2}.$
\par\medskip 
\begin{diagram}[height=1.9em,width=3em]
&  &  \mathfrak R(z_3)\geq \mathfrak R(z_2) &   \rTo^{j}   &            \mathbb A^3 &  \\
  & &\uTo^{q} &      &         \uTo_{i_1}     \\
 \mathfrak R(z_3)=\mathfrak R(z_2) > \mathfrak R(z_1)  & \rTo^r & \mathbb A_{\mathbb R}^2 &   \rTo^{p}   &           \mathbb A^2_{z_3=z_2}\\
  & &\uTo^{l} &      &         \uTo_{i_2}     \\
    &  & \mathbb A_{\mathbb R}  &   \rTo^k  &           \mathbb A^1\\
    \end{diagram}
We have $\Phi_{z_3-z_2}^{fake}:=\mathbf R^{\hdot} i_1^*j_!j^!=\mathbf R^{\hdot} p_!q^*j^!$ and hence $\Psi_{z_1-z_2}^{fake}\Phi_{z_3-z_2}^{fake}=\mathbf R^{\hdot} k_*l^*r_*r^*q^*j^!.$ Let us compute section of $q^*j^!$ over a chamber $\{x_3=x_2>x_1\}.$ Let $\mathcal K$ be a combinatorial sheaf on $\mathbb A^3_{\mathbb R}$ (which is a $!$-restriction of a perverse sheaf) we need to compute its sections with support oi $\mathfrak R(z_3)\geq \mathfrak R(z_2)$ (we restrict ourselves to a chamber $\{x_3=x_2>x_1\}$). The section of $\mathbb D(\mathcal K)$ over chambers $\{x_3=x_2>1\}$ and $\{x_3>x_2>x_1\}$ are $E_{x_3>x_2>x_1}^*\oplus E_{x_2>x_3>x_1}^*\rightarrow E_{x_3=x_2>x_1}^*$ and $E_{x_3>x_2>x_1}^*.$ Hence section with support in $\mathfrak R(z_3)\geq \mathfrak R(z_2)$ over a chamber $x_3=x_2>x_1$ are given by the cohomology of the following complex
$$C^{\hdot}:=\{E_{x_3=x_2>x_1}\oplus E_{x_3>x_2>x_1}\overset{\gamma+\gamma+id}{\longrightarrow}E_{x_2>x_3>x_1}\oplus E_{x_3>x_2>x_1}\}.$$
 Since we are working with a perverse sheaf this complex has only cohomology in degree zero $H^0(C)= \mathrm {Ker}(E_{x_2=x_3>x_1}\rightarrow E_{x_2>x_3>x_1}).$ We set $V_{01}:= \mathrm {Ker}(E_{x_2=x_3>x_1}\rightarrow E_{x_2>x_3>x_1}).$
\par\smallskip 
Consider the following diamgram:
\begin{diagram}[height=1.9em,width=3em]
&  &  \mathfrak R(z_3)\geq \mathfrak R(z_2) &   \rTo^{j}   &            \mathbb A^3 &  \\
  & &\uTo^{q} &      &         \uTo_{i_1}     \\
 \mathfrak R(z_3)=\mathfrak R(z_1) \geq \mathfrak R(z_2)  & \rTo^r & \mathbb A_{\mathbb R}^2 &   \rTo^{t}   &           \mathbb A^2_{z_3=z_2}\\
  & \luTo^s & &      &         \uTo_{i_2}     \\
    &  & \mathbb A_{\mathbb R}  &   \rTo^k  &           \mathbb A^1\\
    \end{diagram}
By previous computations we have $\Phi_{z_1-z_2}^{fake}\Phi_{z_3-z_2}^{fake}:=\mathbf R^{\hdot} k_*s^*r^!q^*j^!.$  Let $\mathcal K$ be a combinatorial sheaf (again a $!$-restriction of a perverse sheaf) on $\mathbb A^3_{\mathbb R}$ we need to compute its sections with support on $\mathfrak R(z_3)\geq \mathfrak R(z_2)$ (we restrict ourselves to chamber $\Delta^{\mathbb R}$). To compute sections over $\Delta^{\mathbb R}$ we perform computation analogous to the previous case and find that it is equal:
$$A:=\mathrm {Ker}\left(E_{x_1=x_2=x_3}\overset{\oplus \gamma}{\longrightarrow} \bigoplus_{C\in \mathbb A^3_{\mathbb R}\setminus\mathfrak R(z_3)\geq \mathfrak R(z_2) \, \dim C=2} E_C\right).$$ 
Hence we set $V_{012}:=A.$

\par\smallskip
Consider the following diagram:
\begin{diagram}[height=1.9em,width=3em]
 \mathfrak R(z_3)<\mathfrak R(z_2) & \rTo^{d} & \mathbb A^3_{\mathbb R} &   \rTo^{j}   &            \mathbb A^3 &  \\
  & &\uTo^{q} &      &         \uTo_{i_1}     \\
 \mathfrak R(z_3)=\mathfrak R(z_1) \geq \mathfrak R(z_2)  & \rTo^r & \mathbb A_{\mathbb R}^2 &   \rTo^{p}   &           \mathbb A^2_{z_3=z_2}\\
  & \luTo^s & &      &         \uTo_{i_2}     \\
    &  & \mathbb A_{\mathbb R}  &   \rTo^k  &           \mathbb A^1\\
\end{diagram}
By previous computations we have $\Phi_{z_1-z_2}^{fake}\Psi_{z_3-z_2}^{fake}:=\mathbf R^{\hdot} k_*s^*r^!q^*d_*d^*j^!.$ In order to compute the iterated cycles we need to find sections of a sheaf $d_*d^*j^!\mathcal E$ over chambers $\{x_3=x_1>x_2\}$ $\{x_3=x_1<x_2\}$ and $\{x_1=x_2=x_3\}.$ Over over the first chamber are trivial since there are no chambers in $\mathfrak R(z_3)<\mathfrak R(z_2)$ such that their closure contains this chamber. Sections over the second chamber are given by the vector space $E_{x_3=x_1<x_2},$ since this chamber lie in the space $\mathfrak R(z_3)<\mathfrak R(z_2).$ Lets compute section over the minima chamber: we need to calculate the $*$-extension of the sheaf $d^*j^!\mathcal E,$ applying standard methods one get the following vector spaces:
$$E_{x_3<x_2=x_1},\quad E_{x_1=x_3<x_2},\quad E_{x_3<x_1=x_2}.
$$
Acting like before we finally set: $$V_{12}\oplus V_{02}:=E_{x_3<x_2=x_1}\oplus E_{x_3<x_1=x_2}.$$ Hence we have the following quiver:
\begin{equation*}
\begin{diagram}[height=2.5em,width=5em]
 &  & E_{x_1<x_3<x_2}  & & \\
  & \ruTo_{v_{01}}\ldTo^{u_{01}} & \uTo^{v_{12}}\dTo_{u_{12}} & \luTo_{v_{02}}\rdTo^{u_{02}} &\\
H^0(C) &  & E_{x_3<x_2=x_1} &   & E_{x_3<x_1=x_2} \\
  & \luTo_{v^{01}}\rdTo^{u^{01}} & \uTo^{v^{12}}\dTo_{u^{12}} & \ruTo_{v^{02}}\ldTo^{u^{02}} &\\
  &  & A  & & \\
\end{diagram}
\end{equation*}
We leave it to the reader to determine canonical and variation operators. 
\end{Ex}

\begin{remark} Recall that in \cite{KalQ} we consider "de Rham" fiber functor $\omega^{dR}$ for $\EuScript D$-modules. One can also construct an isomorphism between fiber functor $\omega^{dR}$ and the de Rham version of a functor $\omega_T.$ One can summarise by saying that there is a "canonical" Betti functor $\omega^B$ and a "canonical" de Rham functor $\omega^{dR}.$ A functor $\omega^B$ (resp. $\omega^{dR}$) is responsible for associative (resp. Lie) bialgebras and a problem of quantization \cite{Drin2} transfers to establishing an isomorphism between these two functors. The latter construction passes through "non-canonical" fiber functors $\omega_T.$ These functors have an advantage being "motivic" in contrast to functors $\omega^{dR}$ and $\omega^{B}.$
\end{remark} 

\bibliographystyle{amsalpha}
\bibliography{tt}

\end{document}